\newtheorem{theorem}{Theorem}
\newtheorem{corollary}{Corollary}
\newtheorem{proposition}{Proposition}
\newtheorem{assumptions}{Assumptions}
\newtheorem{remark}{Remark}
\newtheorem{lemma}{Lemma}
\newtheorem{definition}{Definition}
\def\R{{\mathbb R}}
\def\EXP{{\mathbb E}}
\def\P{{\mathbb P}}
\def\Z{{\mathbb Z}}
\def \A{{\mathcal A}}
\def \L{{\mathcal L}}
\newcommand{\ud}[1]{\, \mathrm{d}#1}
\def \d{\partial}
\newcommand{\deriv}[3][]{\frac{\ud^{#1} \hspace{-0.3mm} #2}{\ud{#3}^{#1}}}
\newcommand{\pderiv}[3][]{\frac{\d^{#1} \hspace{-0.1mm} #2}{\d{#3}^{#1}}}
\newcommand{\grad}[1][]{\nabla_{\! #1} }
\newcommand{\IND}[1] {{ \mathds{1}_{ #1 }} }
\def \la{\langle}
\def \ra{\rangle}
\def \vp{\varphi}
\def \eps{\epsilon}
\def \a{\alpha}
\def \b{\beta}
\def \Dom{\mathrm{Dom}}
\def \Ran{\mathrm{Ran}}
\def \geq{\geqslant}
\def \leq{\leqslant}
\newcommand{\Lp}[1]{ L^{#1}_+ }
\newcommand{\Lm}[1]{ L^{#1}_- }
\begin{document}

\title{Continuity of Local Time: An applied perspective}
\author{ Ramirez, Jorge M.\footnote{Department of Mathematics, 
		National University of Colombia.  
		$^\dagger$ Department of Mathematics, Oregon State University.
		$^+$ Corresponding author:
		waymire@math.oregonstate.edu.  }
	\and Thomann, Enrique A.$^\dagger$
	\and Waymire, Edward C.$^{\dagger+}$
}

\author{Jorge M. Ramirez 
\footnote{ Universidad Nacional de Colombia, Sede Medellin, {jmramirezo@unal.edu.co}}, 
Enirque A. Thomann \footnote{Oregon State University, thomann@science.oregonstate.edu}
and Edward C. Waymire \footnote{Oregon State University, waymire@math.oregonstate.edu}}
%
%
\maketitle

\abstract{
Continuity of  local time for Brownian motion ranks among the most notable mathematical results in the theory of stochastic processes.
This article addresses its implications from the point of view 
of applications.  In particular an extension of previous results on an explicit role of continuity of (natural) local time is obtained for applications to recent classes of problems in physics, biology and
finance involving discontinuities in a dispersion coefficient.  The main theorem and its corollary provide physical principles that relate 
macro scale continuity of deterministic quantities to micro scale continuity of the (stochastic) local time. 
}

\section{Introduction}

Quoting from the backcover of the intriguing recent monograph \cite{barn}:  
\begin{quotation}
	Random change of time
	is key to understanding the nature of various stochastic processes and gives rise to interesting mathematical results and insights of
	importance for the modelling and interpretation of empirically observed dynamic processes
\end{quotation}
This point could hardly have been more
aptly made with regard to the perspective of the present paper.  

The focus of this paper is on identifying the way in which continuity/discontinuity properties of certain local times of a diffusive
Markov process  inform interfacial discontinuities in large scale concentration, diffusion coefficients, and transmission rates.  For these purposes one may ignore processes with drift, and focus on discontinuities in  diffusion rates and/or specific rate coefficients. This builds on related work of the authors where the focus was on interfacial effects on other functionals such as occupation time and first passage times; see \citet{ramirez2013review} for a recent survey. We also restrict our study to one-dimensional processes, some extensions to higher dimensions can be found in \cite{ramirez2011multi}. 

Dispersion problems in the physical sciences are often described by a second order linear parabolic equation in divergence form that results from a conservation law applied to the concentration of some substance. A particular class of interest for this paper is that of the one-dimensional Fokker-Plank equation \eqref{EqFokkerPlanck} with discontinuous parameters $ D $ and $ \eta $ and  specified discontinuities in the concentration. More precisely, we consider the solution $ u(t,y) $ to the following problem for  $ y \in \R\setminus I $, $ t\geq 0 $, 
\begin{equation}\label{EqFokkerPlanck}
\eta \pderiv{u}{t} = \pderiv{}{y} \! \left( \tfrac{1}{2}D \, \pderiv{u}{y} \right), \quad
\left[ D \pderiv{u}{y} \right]_{x_j} \!\!\!= 0, \quad \b^+_{j} u(t,x_j^+) = \b^-_{j} u(t,x_j^-), \quad j \in \Z.
\end{equation}
with a prescribed initial condition $u(0,y) = u_0(y)$, $y \in \R$, and under the following assumptions.

\begin{assumptions} \label{assumptions}
	We consider a discrete set of ``interfaces" 
	\begin{equation}\label{Def_I}
	I := \{x_j: \; j\in \Z\}, \quad x_0 := 0, \quad x_j< x_{j+1}, \quad j \in \Z,
	\end{equation} 
	with no accumulation points. The functions $ D $ and $ \eta $ exhibit jump discontinuities only at points in $ I $:
	\begin{align}\label{Conditions_D_eta}
	[D]_{x_j} := D(x_j^+) - D(x_j^-) \neq 0, \quad [\eta]_{x_j} := \eta(x_j^+) - \eta(x_j^-) \neq 0, \quad j \in \Z.
	\end{align}
	We further assume that $ D $ and $ \eta $ are functions of bounded variation in $ \R $ with $ \eta $ being continuous, and $ D $  differentiable in $ (x_j,x_{j+1}) $ for all $ j \in \Z $. Finally, there exist constants $ 0<k<K<\infty $ such that  
	\begin{align}\label{Cond_Deta_bded}
	k \leq D(x) \leq K, \quad k \leq \eta(x) \leq K, \quad x \in \R.
	\end{align}
	The constants $ \{\beta_j^{\pm} \} $ are strictly positive and such that 
	\begin{equation}\label{Cond_beta}
	\sum_{j \in \Z} \frac{\beta_j^+}{\beta_j^-} < \infty.
	\end{equation}
\end{assumptions}

Equation (\ref{EqFokkerPlanck}) is often referred to as a  {\it continuity equation} for the conserved quantity $ \eta(y) u(t,y) $; Fourier's flux law for heat conduction and the corresponding Fick's law for diffusion being among the most notable such occurrences.  The right-side of the pde is the divergence of the diffusive flux $\frac{1}{2} D \pderiv{u}{y}$ of $ u $, and the first bracket condition is \textit{continuity of flux} at the interfaces.

In Jean Perrin's historic determination of Avogadro's number $ N_A $ from observations of individual particle paths submerged in a homogeneous medium, the procedure was clear \citep{perrin1909mouvement}. Einstein had provided a twofold characterization of the diffusion coefficient $ D $ in Fick's law: first as a function of variables at the molecular scale, including $ N_A $; and second, as the rate of growth of the variance of the position of particles in time. This meant that $ D $, and therefore $ N_A $ could be statistically estimated. 

 If one regards (\ref{EqFokkerPlanck}) as a given physical law that embodies certain interfacial discontinuities at points in $ I $, then the question we want to address is \textit{what corresponding features should be specified about the paths of the stochastic process?} Our basic goal is to show that the answer resides in suitably interpreted continuity properties of local time.

In an informal conversation where we posed this question to David Aldous, his reply was that he wouldn't use a pde to model the phenomena in the first place! Of course this perspective makes the question all the more relevant to probabilistic modeling.  The mathematical (probabilistic) tools are clearly available to do this, and much of the objective of this paper is to identify the most suitable way in which to express the stochastic model  in relation to the underlying phenomena. It will be shown that the interfacial conditions at the pde level, can be characterized in terms of the continuity properties of a certain local time process of the associated diffusion. In this regard, the continuity of local time of standard Brownian motion will be seen to indirectly play a pivotal role.

The evolution problem \eqref{EqFokkerPlanck} can be viewed as the \textit{forward equation} $ \pderiv{u}{t} = \L^*u $ for the operator $ \L^*: \Dom(\L^*) \to L^2(\ud y) $ given by
\begin{equation}\label{Def_Lstar}
\L^*f := \frac{1}{\eta} \deriv{}{y}\left(\tfrac{1}{2} D \deriv{f}{y}\right), \quad 
\end{equation}
for functions $ f \in \Dom(\L^*) $ satisfying, besides other decay conditions, that
\begin{equation}\label{Conditions_Lstar}
\left[D f'\right]_{x_j} \!\!\! = 0, \quad \b_j^+ f'(x_j^+) = \b_{j}^- f'(x_j^-), \quad j \in \Z.
\end{equation}

Due to the presence of the coefficient $\eta(y)$, taking the adjoint in $ L^2(\ud y) $ of $\L^*$ does not generally yield an operator $\L$ that generates a positive contraction semigroup on $C_0(\R)$. In fact, integration by parts yields,
\begin{equation}\label{Def_L}
\L g := \deriv{}{x} \left(\tfrac{1}{2} D \deriv{}{x} \left(\frac{g}{\eta} \right) \right)
\end{equation}
and any $ g \in \Dom(\L) $ will satisfy the following interfacial conditions:
\begin{equation}\label{Conditions_L}
 \left[\frac{g}{\eta}\right]_{x_j} \!\!\! = 0, \quad 
 \frac{D(x_j^+)}{\b_j^+} \left(\frac{g}{\eta}\right)'\!\!\!(x_j^+)
 =\frac{D(x_j^-)}{\b_{j}^-} \left(\frac{g}{\eta}\right)'\!\!\!(x_{j}^-).
\end{equation}
We refer to the corresponding evolution problem $ \pderiv{v}{t} = \L v $ as the \textit{backwards equation}.


While physical laws are often formulated on the basis of conservation principles,  not all such models are apriori conceived in conservation form. In fact some may be explicitly formulated as a specification of coefficients via a Kolmogorov backward equation with an operator of the form \eqref{Def_A} below, or directly in the form of a stochastic differential equation;  for a variety of examples of both in the present context, see  \cite{Berkowitz09, hill1995leakage, hoteit2002three, kuo1999experimental} from hydrolgy, \cite{cantrell2004spatial,  mckenzie2009first, ovaskainen2004habitat, ramirez2012population, schultz2005patch} biology and ecology, \cite{nilsen2011no} finance,  \cite{guo2010particle} astrophysics, and  \cite{matano2008upwelling} from physical oceanography. 

To accommodate the broad class of such possible examples, the present paper follows the following general setting. Let $ \L $ be as in \eqref{Def_L} and define the operator $\A$ by
\begin{equation}\label{Def_A}
\A g := \frac{1}{\eta} \L(\eta g) = \frac{1}{\eta} \deriv{}{x} \left(\tfrac{1}{2} D \deriv{g}{x} \right),
\end{equation}
The interfacial conditions satisfied by a function $g \in \Dom(\A) $ follow from \eqref{Conditions_L} and can generally be written in the  form:
\begin{equation}\label{Conditions_A}
\quad [g]_{x_j} = 0, \quad 
\lambda_j g'(x_j^+) = (1-\lambda_j) g'(x_{j}^-), \quad
j \in \Z
\end{equation}
for some values $ \Lambda := \{\lambda_j: j \in \Z\} \subset (0,1) $ for which we further assume the following decay condition (equivalent to \eqref{Cond_beta} under \eqref{Cond_Deta_bded}):
\begin{equation}\label{ConditionLam}
\sum_{j \in\Z} \frac{1- \lambda_j }{\lambda_j} < \infty.
\end{equation}

In Section \ref{Sec_DiffusionX} we construct a Feller process $ X =\{X(t): t\geq 0\} $ with generator $(\A,\Dom(\A))$. The significance of this association is the following: the fundamental solution $ p(t,x,y) $ to the backwards evolution problem $ \pderiv{v}{t} = \A v $ is precisely the transition probability density of $ X(t) $. Namely, for an initial condition $ v(0,x) = v_0(x) $, the solution $ v(t,x) $ can be written as
\begin{equation}\label{Eqn_pSemigroup}
v(t,x) = \int_{\R} p(t,x,y) v_0(y)  \ud y =  \EXP\left[v_0(X(t)) | X(0) = x\right].
\end{equation}
 It follows, in turn, that the fundamental solution to the original forward problem (\ref{EqFokkerPlanck}) is given by 
\begin{equation}\label{Eqn_densities_pq}
q(t,x,y) = \frac{\eta(x)}{\eta(y)} p(t,x,y).
\end{equation}
 This defines the interpretation we will use with respect to the physics of  \eqref{EqFokkerPlanck} as a Fokker-Plank equation, and to relate it with the diffusion process $X$. 

\begin{remark}
 One might note that this is the form of Doob's h-transform under the further constraint that $\eta$ is harmonic with respect to ${\cal A}$.   However this latter condition is generally not appropriate for the physical examples of interest in the present paper.   On the other hand, were it applicable it could provide an altervative approach to our problem via conditioning, e.g. see \citet{perkowski}.
\end{remark}

In order to identify how the interfacial conditions \eqref{Conditions_A} affect the sample paths of the process $ X $, we will look at the behavior of the \textit{natural local time} of $ X $ at points in $ I $. This notion of local time was introduced in \cite{appuhamillage2013skew} as the density of the occupation time operator with respect to Lebesgue measure. Namely for $ A \in  \mathcal{B}(\R) $, $ \ell^X(t,x) $ is a previsible process, increasing with respect to $ t $, such that
\begin{equation}
\int_0^t \IND{A} (X(s)) \ud s = \int_A \ell^X(t,x) \ud x.
\end{equation}

Our main result is the following:

\begin{theorem}\label{Theo_MainResult}
	Suppose $ D, \eta $ satisfy Assumptions \ref{assumptions}. Let $ X $ be the Feller process with infinitesimal generator $ (\A, \Dom(\A)) $ defined by (\ref{Def_A}, \ref{Conditions_A}) and $ \Lambda $ satisfying \eqref{ConditionLam}. Then
	\begin{equation}\label{Eqn_MainResult}
	\frac{\ell^X(t,x_j^+)}{\ell^X(t,x_j^-)} = \frac{\eta(x_j^+)}{\eta(x_j^-)} \frac{D(x_j^-)}{D(x_j^+)} \frac{\lambda_j}{1-\lambda_j}, \quad j \in \Z.
	\end{equation}
	with probability one, for any $ t $ such that $ \ell^X(t,x_j^+) >0 $.
\end{theorem}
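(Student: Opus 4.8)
The plan is to transfer the statement from the natural local time $\ell^X$ to the one-sided \emph{semimartingale} local times $L^{X,\pm}_t(x_j)$ of $X$, for which the interfacial ratio is transparent. First I would record that the process $X$ of Section~\ref{Sec_DiffusionX} is a continuous semimartingale with quadratic variation $\ud\langle X\rangle_t=\tfrac{D(X_t)}{\eta(X_t)}\ud t$, obtained by applying Dynkin's formula to $x\mapsto x$ and $x\mapsto x^2$ and reading off the second-order coefficient of $\A$. Inserting a nonnegative Borel $h$ into the occupation-times formula $\int_0^t h(X_s)\ud\langle X\rangle_s=\int_\R h(a)L^{X,+}_t(a)\ud a$ and into the defining identity $\int_0^t h(X_s)\ud s=\int_\R h(a)\ell^X(t,a)\ud a$ of natural local time yields $L^{X,+}_t(a)=\tfrac{D(a)}{\eta(a)}\ell^X(t,a)$ for Lebesgue-a.e.\ $a$, simultaneously in $t$, a.s.; since $a\mapsto L^{X,+}_t(a)$ is right-continuous, $D/\eta$ has right limits, and $\ell^X(t,\cdot)$ has the prescribed right limit, letting $a\downarrow x_j$ along a co-null set gives $L^{X,+}_t(x_j)=\tfrac{D(x_j^+)}{\eta(x_j^+)}\ell^X(t,x_j^+)$, and letting $a\uparrow x_j$ gives, with the left local time, $L^{X,-}_t(x_j)=\tfrac{D(x_j^-)}{\eta(x_j^-)}\ell^X(t,x_j^-)$. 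Thus \eqref{Eqn_MainResult} is equivalent to the identity $L^{X,+}_t(x_j)/L^{X,-}_t(x_j)=\lambda_j/(1-\lambda_j)$.

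To prove that identity I would localize at $x_j$. Fix $\delta>0$ with $[x_j-\delta,x_j+\delta]\cap I=\{x_j\}$ and let $g\in\Dom(\A)$ coincide on that interval with the increasing $\A$-harmonic chart at $x_j$---$g'=c^+/D$ to the right of $x_j$, $g'=c^-/D$ to the left, with $c^\pm>0$ chosen so the domain relation $\lambda_j g'(x_j^+)=(1-\lambda_j)g'(x_j^-)$ of \eqref{Conditions_A} holds (equivalently, $g$ agrees near $x_j$ with a scale function of $X$). With $\tau:=\inf\{t:X_t\notin(x_j-\delta,x_j+\delta)\}$, the (localized) martingale problem for $(\A,\Dom(\A))$ together with $\A g\equiv0$ near $x_j$ makes $Y_t:=g(X_{t\wedge\tau})$ a bounded continuous martingale. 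The pivotal point---and where continuity of Brownian local time enters, a continuous local martingale being a time-changed Brownian motion and hence having no ``skew'' at any level---is that $L^{Y,+}_t(g(x_j))=L^{Y,-}_t(g(x_j))$: the difference of these one-sided local times equals, up to sign, $2\int_0^t\IND{\{Y_s=g(x_j)\}}\ud Y_s$, whose martingale part has zero quadratic variation by the occupation formula and whose finite-variation part reduces to the Lebesgue-null integral $\int_0^{t\wedge\tau}\IND{\{X_s=x_j\}}\A g(X_s)\ud s$. On the other hand, since $g$ is an increasing chart on $(x_j-\delta,x_j+\delta)$ with a corner at $x_j$, the occupation formula for $Y$ gives the change-of-variables identity $L^{Y,\pm}_t(g(x_j))=g'(x_j^\pm)L^{X,\pm}_{t\wedge\tau}(x_j)$, the distinct slopes $g'(x_j^\pm)$ appearing precisely because $g'$ jumps at $x_j$. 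Combining the two displays yields $g'(x_j^+)L^{X,+}_{t\wedge\tau}(x_j)=g'(x_j^-)L^{X,-}_{t\wedge\tau}(x_j)$, i.e.\ $L^{X,+}_{t\wedge\tau}(x_j)/L^{X,-}_{t\wedge\tau}(x_j)=g'(x_j^-)/g'(x_j^+)=\lambda_j/(1-\lambda_j)$.

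Finally I would remove the stopping time: the continuous finite-variation process $Z_t:=(1-\lambda_j)L^{X,+}_t(x_j)-\lambda_j L^{X,-}_t(x_j)$ starts at $0$, varies only on $\{s:X_s=x_j\}$, and vanishes on $[0,\tau]$; were $T:=\inf\{t:Z_t\neq0\}$ finite we would have $X_T=x_j$, and re-running the previous paragraph from time $T$ via the strong Markov property (with a fresh exit time $\tau'>T$) would force $Z\equiv Z_T=0$ on $[T,T+\tau']$, contradicting minimality of $T$; hence $Z\equiv0$. In particular $L^{X,-}_t(x_j)>0$ whenever $L^{X,+}_t(x_j)>0$, so the ratios are legitimate, and combining with the first paragraph gives \eqref{Eqn_MainResult}. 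The main obstacle is the second step, and within it the disciplined bookkeeping of one-sided quantities at the non-smooth point $x_j$: that the right and left semimartingale local times of the martingale $Y$ coincide, that they pull back to $X$ with the two different Jacobian factors $g'(x_j^\pm)$, and that these identities hold \emph{at} $x_j$ rather than merely almost everywhere, via right-/left-continuity in the space variable. The remaining ingredients---the existence and regularity of the local harmonic $g$ inside $\Dom(\A)$, the localization, and the strong-Markov patching---are routine.
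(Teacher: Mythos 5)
Your argument is correct, and it reaches the paper's conclusion by a route that is recognizably the same mechanism but organized quite differently. The paper's proof is a two-line corollary of Theorem~\ref{Theo_nlt_m_cont}: using the \emph{global} scale function $s$ and the representation $s(X(t))=B(T(t))$, it writes $\ell^X(t,x)=\tfrac{m'(x)}{2}L^B(T(t),s(x))$, invokes Trotter's continuity of $L^B$ together with continuity of $s$, and reads off the jump of $\ell^X(t,\cdot)$ at $x_j$ as exactly the jump of the speed density, $m'(x_j^+)/m'(x_j^-)$, which is then computed from \eqref{Def_phi_j} and \eqref{Def_m_s}. You instead reduce \eqref{Eqn_MainResult} to the identity $L^{X}_+(t,x_j)/L^{X}_-(t,x_j)=\lambda_j/(1-\lambda_j)$ via the quadratic-variation density $D/\eta$ (this is the paper's relation \eqref{Eq_rel_lts}), and then prove that identity \emph{locally}: a harmonic chart $g$ at $x_j$ plays the role of the scale function, the spatial continuity of the local time of the continuous local martingale $g(X_{\cdot\wedge\tau})$ plays the role of Trotter's theorem, and the one-sided Jacobians $g'(x_j^\pm)$ play the role of $s'(x_j^\pm)$; a strong-Markov patching removes the localization. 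The numbers check out: $g'(x_j^-)/g'(x_j^+)=\lambda_j/(1-\lambda_j)$ by \eqref{Conditions_A}, and combined with the $D/\eta$ conversion this reproduces \eqref{Eqn_MainResult}. What your version buys is self-containedness (no need to have already built the global time-change representation or Theorem~\ref{Theo_nlt_m_cont}); what it costs is length and the extra bookkeeping of localization and patching that the global scale function makes unnecessary. One soft spot: you justify $\ud\langle X\rangle_t=\tfrac{D(X_t)}{\eta(X_t)}\ud t$ by ``Dynkin's formula applied to $x$ and $x^2$,'' but neither function lies in $\Dom(\A)$ (they violate the interface conditions \eqref{Conditions_A}, and applying $\A$ to them produces exactly the local-time corrections you are trying to compute). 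The clean justification is Lemma~\ref{Lemma_ms}(2), i.e.\ \eqref{Eq_QVX} with the densities \eqref{Def_m_s}, or equivalently the SDE \eqref{Eqn_SDE4X}, whose drift and local-time terms have finite variation and hence do not contribute to $\langle X\rangle$. With that substitution your proof is complete.
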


\begin{corollary}\label{Cor_MainResult}
	Suppose $ D, \eta $ and $ \{\beta_j^{\pm} : j \in \Z \}$ satisfy Assumptions \ref{assumptions}. Let $ u $ be the solution to the forward equation \eqref{EqFokkerPlanck} and  $ X $ its associated Feller process. Then 
	\begin{equation}\label{Eqn_CorMainResult}
	\frac{\ell^X(t,x_j^+)}{\ell^X(t,x_j^-)} = \frac{\eta(x_j^+)}{\eta(x_j^-)} \frac{\beta_j^-}{\beta_j^+} = 
	\frac{\eta(x_j^+) u(t,x_j^+)}{\eta(x_j^-) u(t,x_j^-)}, \quad j \in \Z.
	\end{equation}
	with probability one, for any $ t $ such that $ \ell^X(t,x_j^+) >0 $.
\end{corollary}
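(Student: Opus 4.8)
\emph{Proof strategy.} The plan is to obtain the Corollary as a direct specialization of Theorem~\ref{Theo_MainResult}. First I would identify the abstract transmission parameter $\lambda_j$ of the generator $\A$ in terms of the physical data $D(x_j^\pm)$ and $\beta_j^\pm$; then I would substitute the resulting value of $\lambda_j/(1-\lambda_j)$ into \eqref{Eqn_MainResult} and simplify. The second equality in \eqref{Eqn_CorMainResult} will then be read off directly from the interfacial transmission condition already imposed on $u$ in \eqref{EqFokkerPlanck}.

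For the identification step, recall from \eqref{Def_A} that $\A g=\eta^{-1}\L(\eta g)$, so that $g\in\Dom(\A)$ if and only if $\eta g\in\Dom(\L)$. Replacing $g$ by $\eta g$ in the interfacial conditions \eqref{Conditions_L} and using $(\eta g)/\eta=g$, a function $g\in\Dom(\A)$ satisfies $[g]_{x_j}=0$ together with $\frac{D(x_j^+)}{\beta_j^+}g'(x_j^+)=\frac{D(x_j^-)}{\beta_j^-}g'(x_j^-)$; comparing this with the normalized form \eqref{Conditions_A} yields
\[
\frac{\lambda_j}{1-\lambda_j}=\frac{D(x_j^+)}{D(x_j^-)}\cdot\frac{\beta_j^-}{\beta_j^+},\qquad j\in\Z .
\]
Because $k\leq D\leq K$ forces $D(x_j^-)/D(x_j^+)\in[k/K,K/k]$, this identity also shows that \eqref{ConditionLam} follows from \eqref{Cond_beta}, so the hypotheses of Theorem~\ref{Theo_MainResult} are indeed met. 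I would record this short computation as a preliminary lemma.

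Substituting this value of $\lambda_j/(1-\lambda_j)$ into \eqref{Eqn_MainResult}, the factors $D(x_j^-)/D(x_j^+)$ and $D(x_j^+)/D(x_j^-)$ cancel, leaving $\ell^X(t,x_j^+)/\ell^X(t,x_j^-)=\frac{\eta(x_j^+)}{\eta(x_j^-)}\frac{\beta_j^-}{\beta_j^+}$ almost surely on $\{\ell^X(t,x_j^+)>0\}$, which is the first equality of \eqref{Eqn_CorMainResult}. For the second, the transmission condition $\beta_j^+u(t,x_j^+)=\beta_j^-u(t,x_j^-)$ from \eqref{EqFokkerPlanck} rewrites as $\beta_j^-/\beta_j^+=u(t,x_j^+)/u(t,x_j^-)$, and inserting this above produces $\frac{\eta(x_j^+)u(t,x_j^+)}{\eta(x_j^-)u(t,x_j^-)}$. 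The one place I expect to need care, and the main (if minor) obstacle, is the legitimacy of this division, i.e.\ that $u(t,x_j^{\pm})\neq 0$: writing $u(t,y)=\int_\R q(t,x,y)u_0(x)\ud x$ with $q(t,x,y)=\frac{\eta(x)}{\eta(y)}p(t,x,y)$ as in \eqref{Eqn_densities_pq}, and using strict positivity of the transition density $p$ of the nondegenerate Feller diffusion $X$, one gets $u(t,\cdot)>0$ on $\R$ for every $t>0$ whenever $u_0\geq 0$ is not identically zero --- the regime relevant to the physical problem --- while for general data the identity should be stated for those $t$ and $j$ with $u(t,x_j^-)\neq 0$. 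Everything else is the bookkeeping already carried out in the proof of Theorem~\ref{Theo_MainResult}.
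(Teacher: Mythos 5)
Your proposal is correct and follows essentially the same route the paper intends: the corollary is obtained by substituting the relation $\frac{\lambda_j}{1-\lambda_j}=\frac{D(x_j^+)}{D(x_j^-)}\frac{\beta_j^-}{\beta_j^+}$ into \eqref{Eqn_MainResult} and then invoking the transmission condition $\beta_j^+u(t,x_j^+)=\beta_j^-u(t,x_j^-)$ from \eqref{EqFokkerPlanck}. The only remark worth making is that the identification of $\lambda_j$ you derive from \eqref{Conditions_L} and \eqref{Conditions_A} is already recorded in the paper as \eqref{Def_phi_j}, so your preliminary lemma can simply cite that display; your added care about the positivity of $u(t,x_j^\pm)$ is a sensible refinement the paper leaves implicit.
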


The main principles to be taken from these mathematical results are as
follows.  First, Theorem \ref{Theo_MainResult} and, in particular, its corollary demonstrate how the continuity properties of local time are reflected in the specifications of \eqref{EqFokkerPlanck}  at interfaces.  It is noteworthy that under the continuity of flux condition,  the diffusion coefficient plays no role with regard to continuity of local time.  In particular, its determination would continue to be by statistical considerations of (local) variances along the
lines used by Perrin, while the  jumps in the natural local time are a manifestation of other characteristics of the model. That is, the relative values of  $\eta$ for given values of  $\beta_j^+/\beta_j^-$, or vice-versa, are reflected in the local time behaviors of sample paths at interfacial points. An example is furnished below in which the $\beta$ and $\eta$ parameters are relative manifestations of geometries of  both the medium and the dispersing  individuals.  

Under continuity of flux, the continuity of local time is equivalent to continuity of the conserved
quantity $\eta u$.  In particular if $\eta \equiv 1$, or more generally is  continuous,
the continuity of $ u $ is a manifestation of the continuity of local time.  
These 
connections between continuities at the macro and micro scale are dependent
on the continuity of flux in defining the physical model.

A second principle arises for those contexts in which \eqref{Def_A}
is a prescribed backward equation with $\eta = \beta
\equiv 1$, e.g. as in financial mathematics, \cite{barn}.  From a physical perspective the interface condition is {\it not} a continuity of flux condition,
however continuity of local time occurs if and only if $\lambda_j = {D_j^+\over D_j^- + D_j^+}, j= 1,2,\dots.$

\subsection{Example: piecewise constant coefficients.}

The scope and interest of our result may be illustrated with a single interface example motivated by applications to ecological dispersion in heterogeneous media. Consider a population of erratically moving individuals occupying an infinitely long, two-dimensional duct as depicted in Figure \ref{FigureExample}. Let $ A(y) $ be the cross-sectional area of the duct, and $ 1/\eta(y) $ the biomass of any individual occupying the cross-section at  a distance $ y $ from the interface $ y=0 $. Let $ c(t,(y,\tilde{y})) $ denote the concentration of biomass, which is assumed continuous throughout, $ \tilde y $ denoting the transversal spatial variable. Then $ \eta(y) c(t,(y,\tilde{y})) $ is the concentration of individuals, for which we assume the following  modification to Fick's law: the flux of individuals is given by $ -D\mathbf{I}_{2\times 2}\grad  c $, namely proportional to the gradient of the concentration of biomass. If $ D,A $ and $ \eta $ are taken to be piece-wise constant:
\begin{equation}
D(y) := \begin{cases}
D^+ &\text{if } y> 0\\
D^- &\text{if } y \leq  0,
\end{cases}, \quad 
A(y) := \begin{cases}
A^+ &\text{if } y> 0\\
A^- &\text{if } y \leq 0
\end{cases}, \quad 
\eta(y) := \begin{cases}
\eta^+ &\text{if } y> 0\\
\eta^- &\text{if } y \leq 0
\end{cases},
\end{equation}
then the concentration of biomass per unit length
\begin{align}
u(t,y) = \int_{A(y)} c(t,(y,\tilde{y})) \ud \tilde{y} 
\end{align}
satisfies the following one-dimensional forward problem
\begin{equation}\label{EqModelExamplePDE1}
\pderiv{u}{t} = \frac{1}{\eta}\pderiv{}{y} \! \left( \tfrac{1}{2}D \, \pderiv{u}{y} \right), \quad
\left[ D \pderiv{u}{y} \right]_{0} \!\!\!= 0, \quad \frac{1}{A^-} u(t,0^-) = \frac{1}{A^+} u(t,0^+), 
\end{equation}
which is of the form \eqref{EqFokkerPlanck} with $\beta^\pm_0 = 1/A^\pm $. The corresponding backwards operator $ \A $ is given by \eqref{Def_A} and \eqref{Conditions_A} with $ \lambda_0 = \frac{A^+D^+}{A^+D^+ + A^-D^-} $; let $ X $ denote the diffusion process generated by $ \A $. 

Given an initial biomass distribution $ u_0 $, by virtue of \eqref{Eqn_densities_pq}, the solution to \eqref{EqModelExamplePDE1} can be written in terms of the transition probabilities of $ X $ as:
\begin{equation}
 u(t,y) = \frac{1}{\eta(y)} \int_{\R} u_0(x) \eta(x) \, p(t,x,y) \ud x.
\end{equation}
Namely, $ p(t,x,y) $ is the density function of the \textit{location $ X(t) $ of individuals} that started at $ x $, and the paths of $ X $ may be regarded as a model for the random movement of the individuals.

As in all cases of an operator $ \A $ of the form \eqref{Def_A} with \textit{piece-wise constant} coefficients at a single interface, the associated process $ X $ may be expressed explicitly by re-scaling an appropriate skew Brownian motion  (see \cite{appuhamillage2013skew, appuhamillage2011occupation, ramirez2013review, ouknine1991skew}). Specifically,
\begin{equation}
X(t) = \sqrt{\frac{D\!\left(B^{\a(D,A,\eta)}(t)\right)}{\eta\!\left(B^{\a(D,A,\eta)}(t)\right)}} B^{\a(D,A,\eta)}(t)
\end{equation}
where $B^{\a(D,A,\eta)}$ is skew Brownian motion with transmission probability 
\begin{equation}
\alpha(D,A,\eta) = \frac{A^+ \sqrt{ \eta^+ D^+}}{A^+ \sqrt{ \eta^+ D^+} + A^- \sqrt{\eta^- D^-}}.
\end{equation}

\bigskip

The effect of the interface on the sample paths of the process $ X $ can be readily observed in the stochastic differential equation solved by $X$:
\begin{equation}
 X(t) = \int_0^t \sqrt{\frac{D(X(r))}{\eta(X(r))}} \ud B(r) + \frac{A^+D^+ - A^-D^-}{2 A^+D^+}  L^X_+(t,x),
\end{equation}
where $ B $ denotes standard Brownian motion, and $ \{L^X_+(t,x): t \geq 0\} $ is the right semimartingale local time at $ x $ of the unknown process $ X $. See \cite{revuz_yor1999} for details. For the current example, the right continuous version of natural local time is given by $ \ell_+^X(t,x) := \frac{\eta(x)}{D(x)} L^X_+(t,x) $, and for every $ t>0 $, $ \ell_+^X(t,\cdot) $ is discontinuous at $ x=0 $ with probability one, with a jump characterized by \eqref{Eqn_MainResult}.

\begin{figure}
	\centering
	\includegraphics[scale = 1]{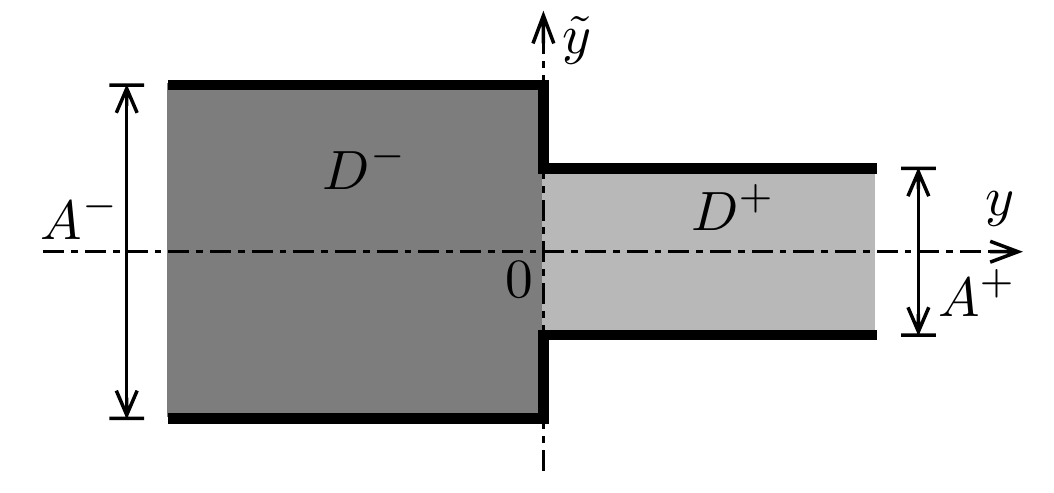}
	\caption{Schematics of an example of a problem leading to a one-dimensional diffusion process with one single interface at $ y=0 $.} \label{FigureExample}
\end{figure}

The nominal effect that a single interface exerts on the particle paths can now be elucidated by looking into further properties of skew Brownian motion, and natural local time. On one hand,  \eqref{Eqn_densities_pq} and the value of $ \alpha(D,\lambda, \eta)$, inform us that for an initial condition concentrated at $ y=0 $, $ u_0 = \delta_0(\ud y) $ in \eqref{EqFokkerPlanck},  the individuals will asymmetrically distribute on either side of the interface:
\begin{align}
\int_0^\infty q(t,0,y) \ud y = \P(X(t) > 0 \big| X(0) = 0) = \frac{\eta^-}{\eta^+} \alpha(D,\lambda, \eta), \quad \text{for all } t>0.
\end{align}

On the other hand, natural local time $ \ell^X(t,x) $ can be related to the time the process spends in a small vicinity of $ x $. Of particular interest is the relative times particles spend at either side of zero. It follows from Corollary \eqref{Cor_MainResult} that: 
\begin{align}\label{Eqn_relLT}
\lim_{ \eps \to 0}\frac{ \int_0^t \IND{(0,\eps)}(X(s)) \ud s}
	{\int_0^t \IND{(-\eps,0)}(X(s)) \ud s} 
	= \frac{\eta^+ A^+}{\eta^- A^-}, \quad t >0, \quad \text{a.s.}
\end{align}

\bigskip

\subsection{Notation and outline}

 The analytical treatment here revolves around functions $ f:\R \to \R $ that are  measurable with respect to the Borel $ \sigma $-algebra $ \mathcal{B}(\R) $. The main function space is $ C_b(\R) $, the space of real valued continuous bounded functions. $ B(\R) $ denotes, in turn, bounded measurable functions on $ \R $. For a measure $ \mu $ on $ (\R, \mathcal{B}(\R)) $, $ L^2(\mu) $ denotes the Hilbert space $ \{f: \R \to \R; \int_{\R} f^2(x) \mu(\ud x) < \infty \} $. The right and left limits of a function at $ x $ are denoted by $ f(x^+) $ and $ f(x^-) $, and their difference is the jump operator $ [f]_x = f(x^+) - f(x^-) $. Whenever defined, the derivative of $ f $ is $ \deriv{f}{x} $ or $ f' $, while $ f_{\pm}'(x) = f'(x^{\pm}) $ denote its left and right derivatives at $ x $.  
 
 For the general theory of one-dimensional diffusion processes used here, we refer the reader to \citet{revuz_yor1999}. A diffusion process consists of the measurable space $ (\Omega, \mathcal{F}) $ and a family of probability measures $ \{\P_x: x \in \R \} $.  A sample path of the process is $ X = \{X(t): t \geq 0\} \in \Omega $. Under $ \P_x $, the paths of $ X $ ``start at $ x $'', namely, $ \P_x(X(0) = x ) = 1 $ for all $ x \in \R $. Only diffusions on $ \R $, with infinite lifetime are considered here. Much of the analysis of such processes is undertaken in terms of their scale and speed measures, which we treat as follows. For $ a \in \R $, the hitting time of $ a $ by $ X $ is $  H^X_a = \inf\{t>0: X(t) = a \} $ and the scale measure $ s(\ud x) $ is characterized by
\begin{equation}\label{Def_sHaHb}
\P_x(H^X_b < H^X_a) = \frac{s((a,x))}{s(a,b)}, \quad a<x<b. 
\end{equation}
Every scale measure in this paper will be absolutely continuous with respect to Lebesgue measure, $ s(\ud x) = s'(x) \ud x $. We also define, without room for confusion, the scale function
\begin{equation}\label{Def_scaleFun}
 s:\R \to \R, \quad s(x) := \int_0^x s'(y) \ud y. 
\end{equation}
The speed measure $ m(\ud x) $ of $ X $ is the unique Radon measure such that 
\begin{equation}\label{Def_mHaHb}
\EXP_x(H_a^X \wedge H_b^X) = \int_\R \frac{\left[s(x\wedge y)-s(a)\right] \left[ s(b)-s(x \vee y)\right]}{s(b)-s(a)} m(\ud y), \quad a<x<b.
\end{equation}
The infinitesimal generator of the process can be written in terms of the speed and scale measures as follows
\begin{equation}\label{Eq_Admds}
\A f = \deriv{}{m} \deriv{}{s} f, \quad f \in \Dom(\A),
\end{equation}
in the sense that,
\begin{equation}
	\deriv{f}{s}(x_2) - \deriv{f}{s}(x_1) = \int_{x_1}^{x_2} \A f(x) m (\ud y).
\end{equation}
Moreover,
\begin{equation}\label{Eq_MartProb}
f(X_t) - f(X_0) - \int_0^t \A f(X(s)) \ud s, \quad  f \in \Dom(\A), \; x \in \R 
\end{equation}
is a martingale. For a given operator $ (\A, \Dom(\A)) $ if \eqref{Eq_MartProb} holds, we say that $ X $ solves the \textit{martingale problem} for $ \A $.

The rest of the paper is organized as follows. In the next section we provide a construction of the diffusion process $ X $ associated to the operator \eqref{Def_A} and identify a stochastic differential equation for which $ X $ is the unique strong solution. In section \ref{Sec_LocalTime} we define three related but different notions of local time, including the \textit{natural local time} and characterize its spatial continuity properties. The proof of the main results \ref{Theo_MainResult} and \ref{Cor_MainResult} follows directly from such characterization.

\section{On the Diffusion $X$} \label{Sec_DiffusionX}

As illustrated in \cite{ramirez2013review}, the derivation of an associated process to an evolution operator \eqref{EqFokkerPlanck} can be achieved in several ways, including the general theory of Dirichlet forms, or by martingale methods. Here, we ``read off'' the speed and scale measures from the backward operator written in the form \eqref{Eq_Admds}, and construct the appropriate process via a stochastic differential equation. A similar approach was carried out in the case of piecewise constant coefficients by \cite{ramirez2011multi}, and will be extended here to the present framework.

Recall Assumptions \ref{assumptions} on $ I $, $ D $ and $ \eta $, and let $ (\A, \Dom(\A)) $ be as in  \eqref{Def_A}, \eqref{Conditions_A}. Recursively define a sequence $\varphi_j, j\in \Z$ by
\begin{equation}\label{Def_phi_j}
\frac{ \vp_j}{\vp_{j-1}} = \frac{D(x_j^+)(1-\lambda_j)}{D(x_j^-) \lambda_j} = \frac{\beta_j^+}{\beta_j^-}, \; j \in \Z, \quad \vp_0 := 1.
\end{equation}
Then the generator $(\A,\Dom(\A))$ given by \eqref{Def_A}  may be equivalently expressed by
\begin{equation}\label{Def_A_final} 
\A f(x) = \frac{\vp_j}{\eta(x)} \deriv{}{x} \left( \frac{D(x)}{2\vp_j} \deriv{f}{x}\right), \quad x \in (x_j, x_{j+1}), \; j \in \Z, 
\end{equation}
acting on functions in $ C_b(\R) $ that are twice continuously differentiable within each $ (x_j,x_{j+1}) $ and such that
\begin{equation}\label{Conditions_A_final}
\frac{D(x_j^-)}{\vp_{j-1}} f'(x_j^-) = \frac{D(x_j^+)}{\vp_{j}} f'(x_j^+), \quad j \in \Z.
\end{equation}

In view of \eqref{Eq_Admds} and \eqref{Def_A_final}, we take $ m(\ud x) := m'(x) \ud x $ and $ s(\ud x) = s'(x) \ud x $ with densities prescribed on $ \R \setminus I $ by
\begin{equation}\label{Def_m_s}
s'(x) = \frac{2 \vp_j}{D(x)}, \quad m'(x) = \frac{\eta(x)}{\vp_j}, \quad x \in (x_j,x_{j+1}), \; j \in \Z.
\end{equation}

The existence of an associated diffusion process is established in the following theorem.

\begin{theorem}\label{Theo_ExistenceX}
	Suppose $ D $ and $ \eta $ satisfy Assumptions \ref{assumptions}, and $ \Lambda $ satisfies \eqref{ConditionLam}. Let $ m $ and $ s $ be measures with densities given by \eqref{Def_m_s}. Then there exist a Feller diffusion $ X = (\Omega, \mathcal{F},\{\P_x: x\in \R \} )$ with speed and scale measures $ m $ and $ s $, respectively, and whose transition probability function $ p(t,x,\ud y) $ is the fundamental solution to the backwards equation $ \pderiv{v}{t} = \A v$  with $ (\A, \Dom(\A)) $ given by \eqref{Def_A_final}, \eqref{Conditions_A_final}. Moreover, $ q $ in \eqref{Eqn_densities_pq} is the fundamental solution to the forward problem \eqref{EqFokkerPlanck}.
\end{theorem}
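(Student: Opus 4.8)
The plan is to construct $X$ directly from the scale function $s$ and speed measure $m$ recorded in \eqref{Def_m_s}, using the classical representation of a one-dimensional regular diffusion as a Brownian motion in its natural scale run under the time change determined by $m$ (\citet{revuz_yor1999}, Ch.~VII); an essentially equivalent route, parallel to the treatment of piecewise constant coefficients in \cite{ramirez2011multi, appuhamillage2013skew, ouknine1991skew}, is to realise $X$ as the unique strong solution of a stochastic differential equation with diffusion coefficient $\sqrt{D/\eta}$, a locally bounded drift on $\R\setminus I$, and a skew-type local-time term at each $x_j$ whose coefficient is fixed by $\lambda_j$. Either way, three things must be verified in turn: (i) that the data $(s,m)$ produce a conservative Feller diffusion on all of $\R$ (no explosion, Feller property on $C_0(\R)$); (ii) that its generator is precisely $(\A,\Dom(\A))$ in the form \eqref{Def_A_final}--\eqref{Conditions_A_final}; and (iii) that the transition kernel carries a density $p(t,x,y)$ which is the fundamental solution of the backward equation, so that $q$ from \eqref{Eqn_densities_pq} is the fundamental solution of the forward problem \eqref{EqFokkerPlanck}.

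For (ii) the key is bookkeeping with $s$: a bounded continuous $f$ that is $C^2$ on each $(x_j,x_{j+1})$ has there $\deriv{f}{s}(x)=f'(x)/s'(x)=\tfrac{D(x)}{2\vp_j}f'(x)$, so the interface condition \eqref{Conditions_A_final} says exactly that $\deriv{f}{s}$ extends continuously across each $x_j$, which is precisely the condition for $f$ to lie in the domain of $\deriv{}{m}\deriv{}{s}$; and then $\deriv{}{m}\deriv{f}{s}(x)=\tfrac{1}{m'(x)}\deriv{}{x}\!\big(\tfrac{D(x)}{2\vp_j}f'(x)\big)=\A f(x)$ on $(x_j,x_{j+1})$ by \eqref{Def_A_final}. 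Since by construction $s$ and $m$ are the scale and speed of the time-changed process, \eqref{Eq_Admds} then identifies its generator with $(\A,\Dom(\A))$. For (i), $m$ and $s$ are Radon with densities bounded above and below on compacts by \eqref{Cond_Deta_bded} and the discreteness of $I$, so the construction yields a regular diffusion on the open interval between the scale images of the two ends of $\R$; conservativeness amounts to showing both $\pm\infty$ are inaccessible, which is where \eqref{ConditionLam} enters: it pins down the tail behaviour of the products $\vp_j$ of \eqref{Def_phi_j} ($\vp_j\to 0$ as $j\to+\infty$, $\vp_j\to+\infty$ as $j\to-\infty$), from which, together with the absence of accumulation points in $I$, one reads off --- via Feller's boundary test phrased through the convergence or divergence of the integrals built from \eqref{Def_sHaHb} and \eqref{Def_mHaHb} --- that neither endpoint is an exit boundary; the Feller property on $C_0(\R)$ is then automatic from the general theory.

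For (iii), under the standing assumptions the transition semigroup has a kernel with a density $p(t,x,y)$ that is jointly continuous, $C^\infty$ in $y$ on each $(x_j,x_{j+1})$, and satisfies \eqref{Conditions_A_final} in $y$; since $\pderiv{}{t}P_tv_0=\A P_tv_0$ for $v_0\in\Dom(\A)$, the map $p(t,x,\cdot)$ solves $\pderiv{}{t}p=\A_x p$, i.e. the backward problem. For the forward problem one exploits that $\A=\deriv{}{m}\deriv{}{s}$ is symmetric on $L^2(m)$, so the $m$-density $p(t,x,y)/m'(y)$ is symmetric in $(x,y)$; combined with $m'=\eta/\vp_j$ from \eqref{Def_m_s} this gives $q(t,x,y)=\tfrac{\eta(x)}{\eta(y)}p(t,x,y)=\tfrac{\vp_{j(x)}}{\vp_{j(y)}}\,p(t,y,x)$, where $j(y)$ is the index of the interval containing $y$. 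Differentiating this identity in $y$ off $I$ (where $\vp_{j(y)}$ is locally constant), inserting $\pderiv{}{t}p(t,y,x)=\A_y p(t,y,x)$ in the form \eqref{Def_A_final}, and cancelling the $\vp$'s yields $\eta\,\pderiv{}{t}q=\pderiv{}{y}\!\big(\tfrac12 D\,\pderiv{}{y}q\big)$ on $\R\setminus I$; the flux condition $[D\,\pderiv{}{y}q]_{x_j}=0$ is the continuity of $\deriv{}{s}p(t,\cdot,x)$ transcribed through the same identities, and the transmission condition $\b_j^+q(t,x,x_j^+)=\b_j^- q(t,x,x_j^-)$ follows from continuity of $p(t,\cdot,x)$ together with $\vp_j/\vp_{j-1}=\b_j^+/\b_j^-$. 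With the correct $\delta$-initial data and total mass this identifies $q$ as the fundamental solution of \eqref{EqFokkerPlanck}.

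I expect the main obstacle to be the regularity claim underlying (iii): one needs the transition kernel to carry a genuinely classical density --- jointly continuous, piecewise smooth, and satisfying the interface relations pointwise --- rather than a merely $L^2$ or distributional solution, and in the general bounded-variation setting this requires either a careful patching of local parametrices across the interfaces or an appeal to the regularity theory of divergence-form parabolic operators with discontinuous coefficients (the single-interface piecewise constant case being easy because the kernel is an explicit rescaled skew-Brownian heat kernel). A secondary but also delicate point is the boundary analysis in (i): checking carefully that the tail decay of $\vp_j$ forced by \eqref{ConditionLam}, against the given geometry of $I$, really does render both ends of $\R$ non-exit, so that the lifetime is infinite and no boundary condition at $\pm\infty$ is needed.
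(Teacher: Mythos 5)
Your outline is essentially sound and, for the construction itself, follows the same route as the paper: read off $s'$ and $m'$ from \eqref{Def_m_s}, observe that \eqref{Conditions_A_final} is exactly the statement that $\deriv{f}{s}=\tfrac{D}{2\vp_j}f'$ extends continuously across each $x_j$ so that the generator is $\deriv{}{m}\deriv{}{s}=\A$, and realise $X$ either as a time-changed Brownian motion in natural scale or as the strong solution of a skew-type SDE (the paper delegates this to Theorem \ref{The_ConstructX}, then cites \citet{mandl1968analytical} for the Feller property and \citet{ethier2009markov} for uniqueness of the martingale problem). Where you genuinely diverge is the last step: the paper simply asserts that $q=\tfrac{\eta(x)}{\eta(y)}p$ solves \eqref{EqFokkerPlanck} by appealing to uniqueness of fundamental solutions of parabolic equations \citep{friedman2013partial}, whereas you derive it by hand from the $L^2(m)$-symmetry of the semigroup, writing $q(t,x,y)=\tfrac{\vp_{j(x)}}{\vp_{j(y)}}p(t,y,x)$ and pushing the backward equation in $y$ through this identity; your computation is correct (the flux condition is the transcribed continuity of $\deriv{}{s}p(t,\cdot,x)$, and $\b_j^+q(x_j^+)=\b_j^-q(x_j^-)$ follows from continuity of $p(t,\cdot,x)$ and $\vp_j/\vp_{j-1}=\b_j^+/\b_j^-$), and it is more self-contained and more informative than the paper's citation, at the cost of needing the same pointwise regularity of the kernel that you rightly flag as the main obstacle. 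Your treatment of inaccessibility of $\pm\infty$ via Feller's boundary test and the tail behaviour of the $\vp_j$ is also more explicit than the paper's one-line assertion that \eqref{Cond_Deta_bded} suffices; since $s'$ and $m'$ involve $\vp_j$ and not only the bounds $k,K$, your instinct that \eqref{ConditionLam} must actually be used there is well placed, and carrying out that estimate would strengthen rather than merely reproduce the published argument.
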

\begin{proof}
	Note first that the boundedness assumptions on $ D $ and $ \eta $, together with the fact that $ I $ has no accumulation points, make $ m $ and $ s $ Radon measures. Since $ m $ and $ s $ are assumed to have piecewise continuous densities, \eqref{Eq_Admds} takes the form $ \A f(x) = \frac{1}{m'(x^+)}\deriv{}{x}\left(\frac{f'(x)}{s'(x)}\right) $ with $ \Dom(\A) $ being comprised of all functions $ f \in C_b(\R) $ such that $ f'(x)/s'(x) $ is continuous on all of $ \R $ and differentiable in $ \R \setminus I $. Specializing to points in $ I $, this specification is equivalent to the conditions in \eqref{Conditions_A_final}. The range $ \Ran(\A) $ of $ \A $ is contained in $ B(\R) $, and $ \Dom(\A) \times \Ran(\A) $ is a linear subset of $ C_b(\R) \times B(\R)$. The existence of a diffusion process $ X $ that solves the martingale problem for $ (\A, \Dom(A)) $ could now be established under very general conditions  \citep[see for example][]{stroock1979multidimensional}. For our purposes however, Theorem \ref{The_ConstructX} below explicitly constructs a diffusion $ X $ with speed and scale measures given by $ m $ and $ s $, which therefore provides a solution to the martingale problem for $ (\A, \Dom(A)) $ for any $ x \in \R $ \citep[Theorem 3.12, p 308]{revuz_yor1999}. Moreover, conditions \eqref{Cond_Deta_bded} make the boundaries $ \pm\infty $ inaccessible for the process $ X $, and it follows from \citet[p.  38]{mandl1968analytical} that the the transition probabilities $ p(t,x,\ud y) $ of $ X $ make $ T_t f(x) = \int_{\R} f(y) p(t,x,\ud y) $ a strongly continuous semigroup with the Feller property, namely $ T_t :C_b(\R) \to C_b(\R) $ for all $  t \geq 0 $. Let $ (\A_0, \Dom(\A_0)) $ be the closure of the infinitesimal generator of $ \{T_t: t\geq 0\} $. By \citet[Theorem 4.1, p 182]{ethier2009markov} $ X $ is generated by $ \A_0 $ in $ C_b(\R) $ and is the unique solution to the martingale problem for $ \A $. It follows that $ \A_0 = \A $ in $ \Dom(\A_0) =\{f \in \Dom(\A) : \A f \in C_b(\R) \} \subset \Dom(\A)$. Moreover, from standard semigroup theory, $ \deriv{}{t} T_t f = \A f $ for all $ f \in \Dom(\A_0) $, namely, $ p(t,x,\ud y) $ is the fundamental solution to the backwards equation $ \pderiv{v}{t} = \A v $. The result now follows from the uniqueness of fundamental solutions for parabolic differential equations \citep[see for example][]{friedman2013partial}
\end{proof}

We turn now to the construction of the diffusion $ X $ with speed and scale measures given by $ m $ and $ s $ in \eqref{Def_m_s}. The general procedure rests on the fact that the process $ s(X) $ is on natural scale and can be written as an appropriate re-scaling of a time-changed Brownian motion (see \citet{mandl1968analytical} for the general theory, or \citet{ramirez2011multi} for the case of piecewise constant coefficients). Then, we derive the stochastic differential equation that the process $ X $ should satisfy and verify that in fact, a strong solution exists. 

We first establish a useful lemma regarding the processes $ X $ and $ s(X) $.
\begin{lemma}\label{Lemma_ms}
	Let $ X $ be a diffusion process with scale and speed measures $ s $ and $ m $ defined through \eqref{Def_sHaHb}-\eqref{Def_mHaHb}, that are absolutely continuous with respect to Lebesgue measure, and have densities $ s' $ and $ m' $ respectively. 
	\begin{enumerate}
		\item Denote $ Y(t) = s(X(t)) $, $ t\geq 0$ where $ s $ is the scale function defined in \eqref{Def_scaleFun}. Then $ Y $  is a diffusion with scale function $ s_Y(x) = x $ and speed measure $ m_Y $ with density satisfying:
		\begin{equation}\label{Eq_relation_mmY}
		m'(x) = s'(x)  m_Y'(s(x)) \quad \text{a.e}.
		\end{equation}
		\item The quadratic variation of $ X $ is given by:
		\begin{equation}\label{Eq_QVX}
			\la X \ra_t = \int_0^t \frac{2}{m'(X(s)) s'(X(s))} \ud s
		\end{equation}
	\end{enumerate}
\end{lemma}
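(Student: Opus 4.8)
The plan is to exploit the fact that scale and speed measures are characterized by the exit-time identities \eqref{Def_sHaHb}--\eqref{Def_mHaHb}, so both statements reduce to transporting those identities through the deterministic change of variables $y = s(x)$ and, for the second part, identifying the generator. For part (1), first I would observe that $s$ is continuous and strictly increasing (since $s'>0$ a.e.\ and, in our setting, $s'$ is bounded above and below away from $0$), hence a homeomorphism of $\R$ onto its image, so $Y(t) = s(X(t))$ is again a strong Markov process with continuous paths, i.e.\ a diffusion. Hitting times transform covariantly: $H^Y_{s(a)} = H^X_a$ because $X(t)=a \iff Y(t)=s(a)$. Substituting into \eqref{Def_sHaHb} gives, for $s(a)<s(x)<s(b)$,
\begin{equation*}
\P_{s(x)}\!\big(H^Y_{s(b)} < H^Y_{s(a)}\big) = \P_x\!\big(H^X_b < H^X_a\big) = \frac{s((a,x))}{s((a,b))} = \frac{s(x)-s(a)}{s(b)-s(a)},
\end{equation*}
which is exactly the statement that $Y$ is on natural scale, $s_Y(y)=y$. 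For the speed measure, I would plug the same hitting-time identifications into \eqref{Def_mHaHb} written for $X$, namely
\begin{equation*}
\EXP_x\big(H^X_a \wedge H^X_b\big) = \int_\R \frac{[s(x\wedge y)-s(a)][s(b)-s(x\vee y)]}{s(b)-s(a)} \, m(\ud y),
\end{equation*}
and compare it with the defining identity \eqref{Def_mHaHb} for $Y$ at the point $s(x)$ with barriers $s(a),s(b)$. Making the substitution $z = s(y)$ in the $Y$-integral (so $m_Y(\ud z)$ pulls back to $m_Y'(s(y))\,s'(y)\,\ud y$ when both are absolutely continuous) and using that the Green-function kernels agree term by term under $y\mapsto s(y)$, one reads off $m'(y)\,\ud y = s'(y)\,m_Y'(s(y))\,\ud y$, i.e.\ \eqref{Eq_relation_mmY} a.e. The one subtlety here is justifying that the kernel in \eqref{Def_mHaHb} for $X$, expressed in the $s$-variable, literally becomes the natural-scale kernel for $Y$; this is immediate once $s_Y(y)=y$ is in hand, since both kernels are then $\frac{[(x\wedge y)-a][b-(x\vee y)]}{b-a}$ evaluated in $s$-coordinates.

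For part (2), I would use that $Y$ is a continuous martingale (being on natural scale with inaccessible or at worst regular boundaries and infinite lifetime, exactly as in the hypotheses transported from $X$), so by the standard speed-measure representation of a natural-scale diffusion its quadratic variation is a time change of Brownian local-time-type form; concretely, for a diffusion on natural scale with speed density $m_Y'$ one has $\la Y\ra_t = \int_0^t 2\,\big(m_Y'\big)^{-1}(Y(s))\,\ud s$ (equivalently, the generator is $\frac{d}{dm_Y}\frac{d}{dy} = \frac{1}{m_Y'}\cdot\frac12 \frac{d^2}{dy^2}$ on the smooth part, so $\la Y\ra$ has density $2/m_Y'(Y)$). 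Then, since $Y(t) = s(X(t))$ with $s$ Lipschitz with derivative $s'$, Itô's formula (or the change-of-variables formula for continuous semimartingales) gives $\ud Y(t) = s'(X(t))\,\ud M(t)$ for the martingale part $M$ of $X$, whence
\begin{equation*}
\la Y\ra_t = \int_0^t s'(X(s))^2\,\ud\la X\ra_s.
\end{equation*}
Combining the two expressions for $\la Y\ra_t$ and using \eqref{Eq_relation_mmY} in the form $m_Y'(s(x)) = m'(x)/s'(x)$,
\begin{equation*}
\int_0^t s'(X(s))^2 \, \ud\la X\ra_s = \int_0^t \frac{2}{m_Y'(s(X(s)))}\,\ud s = \int_0^t \frac{2\, s'(X(s))}{m'(X(s))}\,\ud s,
\end{equation*}
and dividing through the integrands by $s'(X(s))^2$ (legitimate since $s'$ is bounded away from $0$ and $\infty$) yields $\ud\la X\ra_t = \frac{2}{m'(X(t))\,s'(X(t))}\,\ud t$, which is \eqref{Eq_QVX}.

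The main obstacle I anticipate is not the algebra but the care needed at the interfaces and at infinity: one must ensure $Y$ inherits the "genuine diffusion on natural scale" status (continuity of paths, strong Markov property, inaccessibility of $\pm\infty$, no sticky points) so that the quadratic-variation formula for natural-scale diffusions applies verbatim. Under Assumptions \ref{assumptions} this is guaranteed — $s'$ and $m'$ are bounded above and below by \eqref{Cond_Deta_bded} and \eqref{Def_m_s}, $I$ has no accumulation points, and $\pm\infty$ were shown inaccessible in the proof of Theorem \ref{Theo_ExistenceX} — so the homeomorphism $s$ transports all of this to $Y$ without loss. A secondary technical point is the a.e.\ (rather than pointwise) nature of \eqref{Eq_relation_mmY}, which is harmless because it enters only inside Lebesgue integrals in \eqref{Eq_QVX}.
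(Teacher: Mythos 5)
Your proposal is correct and follows essentially the same route as the paper: part (1) is proved by exactly the same change of variables in the exit-time identity \eqref{Def_mHaHb} plus uniqueness of the speed measure, and part (2) rests on the same time-change representation of the natural-scale process $Y$ (the paper makes the representation $Y = B\circ T$ explicit via Breiman's theorem and computes $T(t)=\int_0^t 2\,s'(X(r))/m'(X(r))\,\ud r$ directly, whereas you quote the resulting formula $\la Y\ra_t=\int_0^t 2/m_Y'(Y(s))\,\ud s$ as standard, which is the same content). The only point to tighten is that $s$ is not $C^2$ at the interfaces, so the relation $\la Y\ra_t=\int_0^t s'(X(s))^2\,\ud\la X\ra_s$ should be justified via It\^o--Tanaka (the local-time terms do not contribute to quadratic variation), exactly as the paper does in the reverse direction.
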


\begin{proof}
	That $ s_Y(x) = x $ follow because $ Y $ is on natural scale \citep[p. 302]{revuz_yor1999}.  Let $ a<b $ and recall definition \eqref{Def_mHaHb} of the speed measure. Denoting
	\begin{equation}
	g(a,b,x,y) := \frac{(x \wedge y-a)(b - x \vee y)}{b-a}, \quad x,y \in (a,b).
	\end{equation}
	we can write
	\begin{equation}\label{Eqn_proofLemma_sm1}
	\EXP_x \left(H^X_a \wedge H^X_b\right) = \int_a^b g(s(a),s(b),s(x), s(y)) m'(y) \ud y.
	\end{equation}
	Since $ x \to s(x) $ is an increasing function, $ \P_x(X(t) \in (a,b)) = \P_{s(x)}(Y(t) \in (s(a),s(b))) $ and the expected exit time in \eqref{Eqn_proofLemma_sm1} can also be written as
	\begin{align}
	\EXP_{s(x)} \left(H^Y_{s(a)} \wedge H^Y_{s(b)}\right) &= 
	\int_{s(a)}^{s(b)} g(s(a),s(b),s(x), y) m_Y'(y) \ud y\\
	&= \int_{a}^{b} g(s(a),s(b),s(x), s(z)) s'(z) m_Y'(s(z)) \ud z.
	\end{align}
	The uniqueness of the measure $ m $ \citep[Theorem 3.6, p. 304]{revuz_yor1999} implies \eqref{Eq_relation_mmY}.
	
	To prove the second assertion, let $ B $ be Brownian motion. It follows from \citep[Theorem 16.51]{breiman1992probability} that a version of $Y = s(X)$ can be written as a time change of $ B $  as follows: let $ \phi(t):= \int_0^t \frac{1}{2}m_Y'(B(r)) \ud r $ and $ T(t) = \phi^{-1}(t) $, then $ Y(t) = B(T(t)) $, $ t>0 $. In particular, $ \la Y \ra_t = T(t) $. The quadratic variation of $ X = s^{-1}(Y) $ is therefore
	 \begin{equation}\label{Eq_Lemma1_1}
	 \la X \ra_t = \int_0^t [(s^{-1})'(Y(r))]^2 \ud \la Y \ra_r = \int_0^t \frac{1}{[s'(X(r))]^2} \ud T(r).
	 \end{equation}
	 By \eqref{Eq_relation_mmY}, and performing a change of variables, we can also write $ T $ as
	 \begin{equation}\label{Eq_Lemma1_2}
	 T(t) =  \int_0^{T(t)} 2\frac{1}{m_Y'(B(r))} \ud \phi(r) = \int_0^t 2\frac{s'(X(r))}{m'(X(r))} \ud r.
	 \end{equation}
	 Combining equations \eqref{Eq_Lemma1_1} and \eqref{Eq_Lemma1_2} yields \eqref{Eq_QVX}.
	 
\end{proof}

 The following is an extension of results
of \citet{ouknine1991skew} for the case of piecewise constant coefficient
and a single interface, and is an equation of the general
type considered by \cite{leGall1984one} and, more recently,
\cite{bass2005one}.

\begin{theorem}\label{The_ConstructX}
	Under Assumptions \ref{assumptions}, the process $ X $ constructed in Theorem \ref{Theo_ExistenceX}  is the pathwise unique strong solution to 
	\begin{equation}\label{Eqn_SDE4X}
	 X(t) = \int_0^t \sqrt{\frac{D(X(s))}{\eta(X(s))}} \ud B(s) - \int_0^t \frac{D'(X(s))}{2 \eta(X(s))} \ud s 
	+ \sum_{j \in \Z} \frac{2 \lambda_j - 1}{2 \lambda_j} \Lp{X}(t,x_j)
	\end{equation}
	where $\Lp{X}(t,x)$ is right semimartingale local time of $X$ and the functions $ D $, $ \eta $ are taken to be left continuous at points in $ I $.
\end{theorem}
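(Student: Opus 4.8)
The plan is to prove both halves of the statement --- that the process $X$ of Theorem~\ref{Theo_ExistenceX} solves \eqref{Eqn_SDE4X}, and that \eqref{Eqn_SDE4X} is pathwise well posed --- by going back and forth through the scale function $s$ of \eqref{Def_scaleFun}, \eqref{Def_m_s}. Under Assumptions~\ref{assumptions}, $s$ is a strictly increasing homeomorphism of $\R$ that is $C^2$ on each $(x_j,x_{j+1})$ (there $s'(x)=2\vp_j/D(x)$ with $D$ differentiable and bounded below) and has finite, strictly positive one-sided derivatives at each $x_j$. Hence $h:=s^{-1}$ and $s$ are, on compact sets, differences of convex functions, so the generalized It\^o--Tanaka formula is available for $h(Y)$ and for $s(X)$.

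For existence, recall from Lemma~\ref{Lemma_ms} and its proof that $Y(t):=s(X(t))$ is a time-changed Brownian motion --- in particular a continuous local martingale --- and that $X=h(Y)$. Applying It\^o--Tanaka to $h(Y)$ decomposes $X$ into three pieces. The local-martingale piece $M(t):=\int_0^t h'(Y(s))\,\ud{Y(s)}$ has, by Lemma~\ref{Lemma_ms}(2), quadratic variation $\la M\ra_t=\la X\ra_t=\int_0^t D(X(s))/\eta(X(s))\,\ud s$, so $B(t):=\int_0^t\sqrt{\eta(X(s))/D(X(s))}\,\ud{M(s)}$ is a standard Brownian motion (L\'evy's criterion) and $M(t)=\int_0^t\sqrt{D(X(s))/\eta(X(s))}\,\ud{B(s)}$. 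The absolutely continuous part of the It\^o correction equals, by the occupation-times formula, $\tfrac12\int_0^t h''(Y(s))\,\ud{\la Y\ra_s}$; inserting $h''=-s''/(s')^3$, the explicit form of $s',s''$ on $(x_j,x_{j+1})$, and $\ud{\la Y\ra_s}$ from Lemma~\ref{Lemma_ms} collapses this to the absolutely continuous drift term of \eqref{Eqn_SDE4X} (the set $\{s:X(s)\in I\}$ is Lebesgue-null, which is why $D$ may be taken left continuous on $I$). Finally, the atomic part of $h''$ contributes at each $s(x_j)$ a multiple of $L^Y(t,s(x_j))$ with weight $\tfrac12\bigl(h'_+(s(x_j))-h'_-(s(x_j))\bigr)=\tfrac12\bigl(\tfrac{D(x_j^+)}{2\vp_j}-\tfrac{D(x_j^-)}{2\vp_{j-1}}\bigr)$; simplifying this using the defining recursion \eqref{Def_phi_j} and converting $L^Y(t,s(x_j))$ into the right semimartingale local time $\Lp{X}(t,x_j)$ by the transfer rule for local times under the increasing map $h$ produces exactly the coefficient $(2\lambda_j-1)/(2\lambda_j)$. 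The sum over $j$ has, for each fixed $t$ and path, only finitely many nonzero terms, since $X([0,t])$ is compact and thus meets only finitely many of the non-accumulating $x_j$, while $\Lp{X}(t,x)$ increases only on $\{X=x\}$.

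For pathwise uniqueness, let $X$ be any solution of \eqref{Eqn_SDE4X}; one first checks it is a continuous semimartingale with $\la X\ra_t=\int_0^t D(X(s))/\eta(X(s))\,\ud s$, so that its local times and the It\^o--Tanaka formula for $s(X)$ are legitimate. The point of the recursion \eqref{Def_phi_j} is exactly that, under this transformation, the drift and every interface term of \eqref{Eqn_SDE4X} cancel, so that $Y:=s(X)$ is a continuous local martingale solving the driftless, local-time-free equation $\ud{Y(t)}=\bar\sigma(Y(t))\,\ud{B(t)}$, where $\bar\sigma(y)=2\vp_j/\sqrt{D(h(y))\,\eta(h(y))}$ on $s((x_j,x_{j+1}))$ --- a function that, on compact sets, is of bounded variation (since $D,\eta$ are of bounded variation, $h$ is monotone Lipschitz, and $r\mapsto r^{-1/2}$ is Lipschitz on compact subsets of $(0,\infty)$) and is bounded above and bounded away from $0$; the summability \eqref{ConditionLam} is what makes the scale change behave well at $\pm\infty$ so that this reduction is global. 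Equations of this type have a pathwise unique strong solution by the theory of \cite{leGall1984one} (see also \cite{bass2005one}), which extends the single-interface piecewise-constant case treated in \cite{ouknine1991skew}; pathwise uniqueness for $Y$ transfers through the fixed homeomorphism $s$ to $X=h(Y)$. Since the construction of Theorem~\ref{Theo_ExistenceX} already provides a (weak) solution, Yamada--Watanabe then identifies it as \emph{the} pathwise unique strong solution of \eqref{Eqn_SDE4X}.

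The hard part is the uniqueness half, and within it three items require care: (i) showing that an a priori arbitrary solution of \eqref{Eqn_SDE4X} indeed has the stated quadratic variation, so that $\Lp{X}(t,x_j)$ and the It\^o--Tanaka machinery make sense; (ii) verifying that $\bar\sigma$ genuinely falls within the hypotheses of the cited strong-uniqueness theorems --- that bounded variation and non-degeneracy survive the maps $r\mapsto r^{-1/2}$ and $s$, and that the interplay with \eqref{ConditionLam} is handled --- and transferring uniqueness back cleanly; and (iii) the bookkeeping among the right, symmetric and natural local times at each $x_j$, which is what makes the transfer rules and the cancellation in (ii) come out with precisely the stated constants. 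The existence half is, by contrast, a reasonably mechanical application of It\^o--Tanaka once the structural identities \eqref{Def_phi_j}, \eqref{Def_m_s} and Lemma~\ref{Lemma_ms} are in hand.
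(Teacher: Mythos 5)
Your proposal follows the paper's proof essentially step for step: existence is obtained exactly as in the paper by writing $Y=s(X)$ as a stochastic integral against a Brownian motion via Lemma \ref{Lemma_ms}, and then applying the It\^o--Tanaka and occupation-times formulas to $X=s^{-1}(Y)$, with the same identification of the martingale part, the absolutely continuous drift, and the local-time coefficients $(2\lambda_j-1)/(2\lambda_j)$ from the atoms of $(s^{-1})''$. For pathwise uniqueness the paper simply verifies the hypotheses of \cite{leGall1984one,bass2005one} for \eqref{Eqn_SDE4X} as written ($\sqrt{D/\eta}$ of bounded variation and bounded away from zero, $(s^{-1})''$ a finite measure with atoms of mass less than $\tfrac12$), whereas you unwind that citation by first removing the drift and local-time terms through the scale function and invoking the same results for the resulting driftless equation in $Y$ before transferring back --- a harmless repackaging of the identical underlying argument.
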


\begin{proof}
	 By Lemma \ref{Lemma_ms} the continuous martingale $Y(t) = s(X(t))$ has absolutely continuous quadratic variation $ \la Y \ra_t = \int_0^t Z^2(r) \ud r$ where $ Z(r) := \sqrt{\frac{2 s'(X(r))}{m'(X(r))}} $ . It follows from \citet[Theorem 3.4.2]{karatzas1991brownian} that there exists a filtered probability space with a Brownian motion $ B $, such that $ Y(t) = \int_0^t Z(r) \ud B(r) $. Since $ D $ is assumed to be of bounded variation, the function $ s^{-1}$ can be written as the difference of two convex functions and 
	\begin{equation}
	(s^{-1})''(\ud x) = - \frac{s''(s^{-1}(x))}{[s'(s^{-1}(x))]^3} \ud x + \sum_{j \in \Z} \left[\frac{1}{s'(x_j^+)} - \frac{1}{s'(x_j^-)}\right] \delta_{s^{-1}(x_j)}(\ud x).
	\end{equation}
	Applying the Ito-Tanaka and occupation times formulas \citep[Theorem 1.5 and Exercise 1.23, Chapter VI]{revuz_yor1999} on $ X(t) = s^{-1}(Y(t)) $ yields
	\begin{equation}\label{Eq_spp}
	\begin{split}
	X(t) = \int_0^t \sqrt{\frac{2}{s'_-(X(r)) m'_-(X(r))}} \ud B(r) &- \int_0^t \frac{s''(X(r))}{[s'(X(r))]^2 m'(X(r))} \ud r \\
	& + \sum_{j \in \Z} \left[1- \frac{s'(x_j^+)}{s'(x_j^-)}\right] \Lp{X}(t,x_j)
	\end{split}
	\end{equation}
	which coincides with \eqref{Eqn_SDE4X}.  The pathwise uniqueness of strong solutions follows from \cite{leGall1984one, bass2005one}, by noting that under the current assumptions, $ \sqrt{D/\eta} $ is a function of bounded variation, bounded away from zero, and the measure $ (s^{-1})''(\ud x) $ in \eqref{Eq_spp} is finite with $ \frac{2 \lambda_j - 1}{2 \lambda_j} < \frac{1}{2} $ for all $ j \in \Z $. 
\end{proof}

Having obtained the diffusion $X$ corresponding to the
conservation form of (\ref{EqFokkerPlanck}),  in the next
section we explore the role of continuity of flux in the structure of $X$ and its local time. 


\section{Various Notions of Local Time}\label{Sec_LocalTime}

Local time has a striking mathematical role in the 
development of the modern theory of stochastic processes,
from Brownian motion and diffusion, to continuous semimartingale
calculus, e.g., see \cite{revuz_yor1999, rogers2000diffusions, ito1974diffusion}.  In the course of this development two particular variations on the notion of local time have occured as follows:

\begin{definition}\label{Def_smlt}
  Let $X$ be a continuous semimartingale
with quadratic variation $\langle X\rangle_t$.  The {\it right, left,
symmetric semimartingale
local time (smlt)} of $X$ is a stochastic process, respectively denoted
 $L_+^X(t,x), L_-^X(t,x), L_*^X(t,x),$  $ x\in\R, $  $
t\geq 0,$ continuous in $t$ and determined
by either being right-continuous in $x$, left continuous in
$x$, or by averaging $L_*^X(t,x) = (L_+^X(t,x)+L_-^X(t,x))/2$, and
such that in any case
\begin{equation}
\int_0^t\varphi(X(s)) \ud \langle X\rangle_s = \int_\R\varphi(x)
L_{\pm, *}^X(t,x) \ud x,
\end{equation}
almost surely for any positve Borel measurable function $\varphi$.
\end{definition}

\begin{remark} The notation here is slightly different from that
of \cite{revuz_yor1999}. 
\end{remark}

Observe that by choosing $\varphi$ as an indicator function
of an interval $[x, x+\epsilon), \epsilon > 0,$ one has by
right-continuity, for example, that
\begin{equation}\label{Def_Lim_smlt}
L^X_+(t,x) = \lim_{\epsilon\downarrow 0}{1\over\epsilon}
\int_0^t \IND{[x,x+\eps)}(X(s))  \ud \langle X\rangle_s.
\end{equation}
Similarly,  $\Lm{X}$ can be obtained by using the indicator on the interval $ (x-\eps,x] $.

The next definition is that of diffusion local time (dlt) and requires Feller's notions of speed measure $m(\ud x)$ and scale function $s(x)$ (see \eqref{Def_sHaHb} and \eqref{Def_mHaHb}). It is customary to define dlt only for diffusions on natural scale; e.g., \cite{ito1974diffusion,rogers2000diffusions}. However, since any diffusion in natural scale is a time change of Brownian motion, it follows that its local times will therefore be themselves time changes of the local time of Brownian motion, and therefore always (spatially) continuous. On the other hand, for a general Feller diffusion $ X $ with scale function $ s $, the transformation $ Y = s(X) $ produces a diffusion on natural scale. This transformation renders local time continuity as a {\it generic} property that does not further inform more specific structure of the diffusion $X$. We thus extend the definition to Feller diffusions with any scale.

\begin{definition}\label{Def_dlt}   Let $Y$ be a diffusion 
with speed measure $m_Y(\ud y)$.  Then the {\it diffusion local time (dlt)} of $Y$, denoted $\tilde{L}^Y(y,t)$ is 
specified by 
\begin{equation}
\int_0^t\varphi(Y(r)) \ud r = \int_I\varphi(y)\tilde{L}^Y(y,t) \, m_Y(\ud y),
\end{equation}
almost surely for any positive Borel measurable function $\varphi$.
\end{definition}

By Lebesgue's differentiation theorem, it follows that
\begin{equation}\label{Def_Lim_dlt}
\tilde L_+^X (t,x) = \lim_{\epsilon\downarrow 0} \frac{1}{m[x,x+\eps)}
\int_0^t \IND{[x,x+\eps)}(X(s))  \ud s
\end{equation}
with the corresponding formula for $ \tilde L_-^X $, and $ \tilde L^X_* $ as the average.

For  the case of piecewise constant coefficients at a single interface, a particular {\it local time continuity} property at the interface was identified in \cite{appuhamillage2013skew}.  It was useful there to consider a modification of the more standard notions of semimartingale and diffusion local time to one referred to as {\it natural local time}. This was achieved there by exploiting explicit connections with skew Brownian motion indicated above. An extension to piecewise continuous coefficients and multiple interfaces is obtained in the present note.  

The following modification of the definition of local time will be seen
as useful in precisely calibrating jump size  relative to the interface parameters.  Just as in the case of semimartingale local time, one may consider right, left, and
symmetric versions. 

\begin{definition}\label{Def_nlt}  Let $X$ be a regular diffusion.   The 
{\it natural local time (nlt)} of $X$, right, left, and symmetric, respectively, denoted $\ell_{\pm,*}^X(t,x)$ is specified
by the occupation time formula
\begin{equation}
\int_0^t\varphi(X(s)) \ud s = \int_I\varphi(y)\ell_{\pm,*}^X(t,x) \ud x,
\end{equation}
for any positive Borel measureable functions $\varphi$.  The 
right and left versions are defined by the respective
right-continuous, left-continuous versions, while the symmetric
nlt is defined by the arithmetic average of these two.
\end{definition}

\begin{remark}
In its simplest terms, the modification to natural local time is made physically natural by examination of its {\it units}, namely $[{T\over L}]$, whereas
those of smlt are those of (spatial) length $[L]$, while dlt is dimensionless.  However, as previously noted, its essential feature resides in the implications of continuity properties relative to conservation laws. In particular, this puts a notion of stochastic local time on par with fundamentally important principles of
{\it concentration flux} and {\it conservation of mass} for pdes.
\end{remark}

The relationship between the three notions of local time above is summarized in the following proposition, the proof of which follows from a direct application of definitions \ref{Def_smlt}, \ref{Def_dlt} and \ref{Def_nlt}. 
\begin{proposition}
	Let $ X $ be a Feller diffusion with speed measure $ m $ and quadratic variation $ \la X \ra_t = \int_0^t q(X(s)) \ud s $, then 
	\begin{equation}\label{Eq_rel_lts}
	\ell^X(t,x) = {L^X(t,x)\over q(x)} = m'(x) \tilde{L}^X(t,x), \quad \text{ a.s.}
	\end{equation}
	with right and left versions obtained by considering the right and left continuous versions of $ q $ and $ m' $ respectively. 
\end{proposition}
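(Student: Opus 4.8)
The plan is to derive each equality in \eqref{Eq_rel_lts} directly from the defining occupation-time identities of the three notions of local time, using uniqueness of densities. First I would recall that, by Definition \ref{Def_smlt}, the right smlt satisfies $\int_0^t \varphi(X(s)) \ud \la X\ra_s = \int_\R \varphi(x) L^X_+(t,x)\ud x$ for all positive Borel $\varphi$, and that by hypothesis $\la X\ra_t = \int_0^t q(X(s))\ud s$. Substituting $\varphi(x)q(x)$ for $\varphi(x)$ in the occupation-time formula defining natural local time (Definition \ref{Def_nlt}) gives $\int_0^t \varphi(X(s)) q(X(s))\ud s = \int_\R \varphi(x) q(x)\ell^X_+(t,x)\ud x$; but the left-hand side is exactly $\int_0^t \varphi(X(s))\ud\la X\ra_s = \int_\R \varphi(x) L^X_+(t,x)\ud x$. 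Since $\varphi$ ranges over all positive Borel functions, the two densities agree Lebesgue-a.e., and since $q$ is bounded away from zero (so division is harmless), we conclude $\ell^X_+(t,x) = L^X_+(t,x)/q(x)$ a.s.

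Next I would treat the second equality. Starting from Definition \ref{Def_dlt}, the dlt of $X$ satisfies $\int_0^t \varphi(X(s))\ud s = \int_\R \varphi(y)\tilde L^X(t,y)\, m(\ud y) = \int_\R \varphi(y)\tilde L^X(t,y) m'(y)\ud y$, using that $m$ has density $m'$. Comparing this with the natural-local-time identity $\int_0^t\varphi(X(s))\ud s = \int_\R \varphi(y)\ell^X(t,y)\ud y$ and again invoking arbitrariness of $\varphi$ plus uniqueness of densities with respect to Lebesgue measure gives $\ell^X(t,y) = m'(y)\tilde L^X(t,y)$ a.e., a.s. This chains with the first identity to yield the full display \eqref{Eq_rel_lts}.

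Finally I would address the right/left/symmetric bookkeeping. The point is that the three occupation-time identities determine each local time only up to a Lebesgue-null set in $x$; the conventional choice of a right-continuous (resp. left-continuous) version in $x$ is what pins down a canonical representative. Since $q$ and $m'$ are themselves functions of bounded variation with one-sided limits at the interfaces in $I$ (and continuous elsewhere), multiplying a right-continuous local-time version by the right-continuous versions $q(x^+)$, $m'(x^+)$ produces a right-continuous function of $x$, which therefore must coincide with the canonical right nlt; the analogous statement holds on the left, and the symmetric case follows by averaging. I would phrase this as the concluding sentence rather than belaboring it.

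The only real subtlety — hardly an obstacle, but worth a careful sentence — is the passage from ``densities agree for every test function $\varphi$'' to ``the right-continuous versions agree pointwise (a.s.)''. This is where one must not merely invoke a.e. equality but also use that both sides have been chosen with a definite one-sided continuity convention and that $q,m'$ have well-defined one-sided limits at each $x_j$ (guaranteed by Assumptions \ref{assumptions}), so that the product again has the same one-sided continuity. Everything else is a mechanical substitution in the defining integral identities together with the standing bound $k\le D,\eta\le K$ which keeps $q=D/\eta$ and $m'=\eta/\vp_j$ bounded and bounded away from zero, so no division-by-zero or integrability issue arises.
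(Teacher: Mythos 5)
Your argument is correct and is exactly what the paper intends: the paper gives no written proof beyond stating that the proposition ``follows from a direct application of definitions \ref{Def_smlt}, \ref{Def_dlt} and \ref{Def_nlt},'' and your substitution of $\varphi q$ into the occupation-time identities plus uniqueness of Lebesgue densities is precisely that direct application. Your extra paragraph pinning down the one-sided continuous versions via the one-sided limits of $q$ and $m'$ is a sensible (and slightly more careful) elaboration of the same route, not a different approach.
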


The celebrated theorem of \citet{trotter1958property} on the (joint)
continuity of local time for Brownian motion is well-known for
the depth it provides to the analysis of Brownian motion paths.
This result is also naturally at the heart of the 
following general characterization of continuity of natural local time for regular diffusions.

\begin{theorem} \label{Theo_nlt_m_cont}
Let $X$ be a Feller diffusion on $\R$ with absolutely continuous
speed measure $m(\ud x) = m'(x) \ud x$ and scale function $s(x)$. Then the ratio $\ell^X(t,x)/m'(x)$ is continuous.
Moreover,
  the natural local time of $X$ is continuous at $x$ if and
only if $m^\prime$ is continuous at $x$.
\end{theorem}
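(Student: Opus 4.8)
The plan is to pass through the natural-scale process $Y = s(X)$, for which diffusion local time is a time-change of Brownian local time and hence jointly continuous by Trotter's theorem. First I would invoke Lemma \ref{Lemma_ms}: writing $Y(t)=s(X(t))$, the process $Y$ is a diffusion on natural scale with speed measure $m_Y$ satisfying $m'(x) = s'(x)\,m_Y'(s(x))$ a.e., and by the time-change representation in the proof of that lemma $Y(t) = B(T(t))$ for a suitable Brownian motion $B$ and continuous time-change $T$. Consequently the diffusion local time $\tilde L^Y(t,y)$ of $Y$ equals $\tfrac12 L^B(T(t),y)$ (or a constant multiple thereof), which is jointly continuous in $(t,y)$ by Trotter's theorem. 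The occupation-time identity $\int_0^t\varphi(Y(r))\ud r = \int \varphi(y)\tilde L^Y(t,y)\,m_Y(\ud y)$ combined with the change of variables $y = s(x)$ transfers this to $X$: one gets $\tilde L^X(t,x) = \tilde L^Y(t,s(x))$, which is continuous in $x$ because $s$ is continuous and strictly increasing.

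Next I would translate the statement about $\tilde L^X$ into one about natural local time via the Proposition preceding the theorem, namely $\ell^X(t,x) = m'(x)\,\tilde L^X(t,x)$ a.s. (with the appropriate one-sided versions). Since $\tilde L^X(t,\cdot)$ is continuous and strictly positive on the (interior of the) support of the occupation measure, the ratio $\ell^X(t,x)/m'(x) = \tilde L^X(t,x)$ is continuous — this gives the first assertion. For the second assertion: if $m'$ is continuous at $x$, then $\ell^X(t,x) = m'(x)\tilde L^X(t,x)$ is a product of two functions continuous at $x$, hence continuous at $x$. Conversely, if $m'$ has a genuine jump at $x$ (as at the interface points $x_j$, where $m'(x_j^\pm) = \eta(x_j^\pm)/\varphi_{j-1}$ or $\varphi_j$ differ), then because $\tilde L^X(t,x)>0$ whenever $\ell^X(t,x)>0$ and $\tilde L^X$ is continuous, the right and left limits of $\ell^X(t,\cdot)$ at $x$ are $m'(x^+)\tilde L^X(t,x)$ and $m'(x^-)\tilde L^X(t,x)$, which are distinct; so $\ell^X$ is discontinuous at $x$. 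This simultaneously proves the equivalence and, incidentally, sets up the exact jump ratio claimed in Theorem \ref{Theo_MainResult}.

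The main obstacle I anticipate is the rigorous identification $\tilde L^X(t,x) = \tilde L^Y(t,s(x))$, i.e. that diffusion local time is genuinely invariant (in the right sense) under the scale transformation, together with the care needed for the one-sided versions at the jump points of $m'$ and $s'$. One must check that the a.e.\ relation $m'(x) = s'(x)m_Y'(s(x))$ can be upgraded to the correct one-sided statements at each $x_j$ (using left-continuous versions of $s'$, $m'$ as fixed in the conventions), and that the limit definitions \eqref{Def_Lim_dlt} of $\tilde L_\pm^X$ are compatible with pushing forward through $s$; this is where the bounded-variation and boundedness hypotheses in Assumptions \ref{assumptions} and the no-accumulation-points condition on $I$ are used, to guarantee $s$ is bi-Lipschitz and $m_Y$ Radon. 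The strict positivity of $\tilde L^X(t,x)$ on $\{\ell^X(t,x)>0\}$ — needed for the converse — follows since $m'$ is bounded above and below, so $\ell^X(t,x)>0 \iff \tilde L^X(t,x)>0$. Once these points are secured, the rest is a direct application of Trotter's theorem and the Proposition.
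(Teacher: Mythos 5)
Your proposal is correct and follows essentially the same route as the paper: both pass to $Y=s(X)=B(T(t))$ via Lemma \ref{Lemma_ms}, reduce continuity to Trotter's theorem for $L^B$, and use the identity \eqref{Eq_rel_lts} to write $\ell^X(t,x)$ as $m'(x)$ times a function continuous in $x$ (you use the form $\ell^X = m'\,\tilde L^X$, the paper uses $\ell^X = L^X/q$ with \eqref{Eq_QVX} and the Revuz--Yor local-time transformation, which is the other leg of the same proposition). Your explicit attention to the positivity of the continuous factor for the ``only if'' direction is a point the paper's proof leaves implicit.
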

\begin{proof}
	Let $ Y(t) = s(X(t)) = B(T(t)) $ as in the proof of lemma \ref{Lemma_ms}. Then $ L^X(t,x) = L^B(T(t),x) $, $ t \geq 0 $, $ x\in\R $. On the other hand, \eqref{Eq_rel_lts} together with \eqref{Eq_QVX} and \cite[Excercise VI.1.23]{revuz_yor1999}, give
	\begin{equation}
	\ell^X(t,x) = \frac{m'(x) s'(x)}{2}  L^X(t,x) = \frac{ m'(x) }{2}L^Y(t,s(x)) = 
	\frac{m'(x)}{2}  L^B(T(t),s(x))
	\end{equation}
	and the assertion follows from continuity of $ L^B $ and the scale function.
\end{proof}

It is noteworthy that, in general, the semimartingale
local time  of $X$ is {\it not} made continuous by division by $ m' $ as in Theorem \ref{Theo_nlt_m_cont}, while, as has been previously noted,  the diffusion local
time of the process transformed to natural scale is {\it always} continuous.  From the point of view of applications the theorem shows that natural local time
furnishes a microscopic {\it probe} to detect interfacial parameters $\eta$, when $\beta =1$, or $\beta$ when
$\eta = D$, respectively, in (\ref{EqFokkerPlanck}),
through location and size of its discontinuities. 

The desired Theorem \ref{Theo_MainResult} on the role of the continuity of flux
for the process $X$ defined by \eqref{Def_A_final} now
follows as a corollary.

\begin{proof}[Proof of Theorem \ref{Theo_MainResult} ]
	Let $ x_j \in I $. By Theorem \ref{Theo_nlt_m_cont} and the definitions of $ m' $,  $ \vp_j $ in \eqref{Def_phi_j}, \eqref{Def_m_s} give
	\begin{equation}
	\frac{\ell^X(t,x_j^+)}{\ell^X(t,x_j^-)} = \frac{m'(x_j^+)}{m'(x_j^-)} = \frac{\eta(x_j^+) \vp_{j-1}}{\eta(x_j^-)\vp_j} =  \frac{\eta(x_j^+) D(x_j^-) \lambda_j}{\eta(x_j^-)D(x_j^+) (1-\lambda_j)} 
	\end{equation}
\end{proof}

\section{Acknowledgments}
This research was partially supported by a grant DMS-1408947 from the National Science Foundation.

\bibliographystyle{abbrvnat}
\bibliography{ContLocalTime}

\end{document}